\newtheorem{theorem}{Theorem}[section]
\newtheorem{proposition}[theorem]{Proposition}
\newtheorem{definition}[theorem]{Definition}
\numberwithin{theorem}{section}
\numberwithin{equation}{section}
\newtheorem{example}[theorem]{Example}
\title{Derivative for Functions $f : G \to H$, Where $G$ Is a Metric Divisible Group}
\author{
H\'ector Andr\'es Granada D\'iaz \\
Departamento de Matem\'aticas y Estad\'istica\\
Universidad del Tolima, Grupo de Matem\'aticas del Tolima (Grupo-MaT)\\
Barrio Santa Helena Parte Alta Cl 42 1-02\\
Ibagu\'e 730006299, Colombia\\
\texttt{ }\\
\And
Sime\'on Casanova Trujillo\\
Grupo de Investigaci\'on C\'alculo Cient\'ifico y Modelamiento Matem\'atico\\
Universidad Nacional de Colombia, Sede Manizales\\
Manizales 170003, Colombia\\
\texttt{scasanovat@unal.edu.co}\\
\And
Fredy E. Hoyos\\
Departamento de Energ\'ia El\'ectrica y Autom\'atica, Facultad de Minas\\
Universidad Nacional de Colombia, Sede Medell\'in\\
Carrera 80 No. 65-223, Robledo\\
Medell\'in 050041, Colombia\\
\texttt{ }
}
\begin{document}
\maketitle

% Published version information and license
\noindent\textbf{Citation:} Granada D\'iaz, H.A.; Trujillo, S.C.; Hoyos, F.E. Derivative for Functions $f : G \to H$, Where $G$ Is a Metric Divisible Group. \textit{Mathematics} 2022, 10, 3559. \url{https://doi.org/10.3390/math10193559}.\\[0.3em]
\noindent\textbf{Copyright:} \copyright{} 2022 by the authors. This article is an open access article distributed under the terms and conditions of the Creative Commons Attribution (CC BY) license (\url{https://creativecommons.org/licenses/by/4.0/}).\\

\begin{abstract}
In this paper, a derivative for functions $f : G \to H$, where $G$ is any metric divisible group and $H$ is a metric Abelian group with a group metric, is defined. Basic differentiation theorems are stated and demonstrated. In particular, we obtain the Chain Rule
\end{abstract}

% keywords can be removed
\keywords{topological group \and homomorphism \and differentiability \and metric group \and locally compact space \and topological vector space}

\section{Introduction}

The concept of a derivative is one of the most fundamental in mathematics. Its definition is based on the concept of the limit of a function~\cite{botsko1985}. However, it is known that a formulation equivalent to the concept of a derivative can be obtained without having to use the concept of a limit. 

For example, in~\cite{caratheodory1954} an equivalent version of the derivative is presented, using only the topological concept of continuity and the algebraic concept of ``factorization'', which in his honor is known as the Carathéodory derivative.

In~\cite{kuhn1991}, Kuhn again takes up the definition given in~\cite{caratheodory1954} and shows the strength of this definition in the proof of the classical theorems of differential calculus. In~\cite{frolicher1966}, the concept of a derivative is presented for functions defined between pseudo-topological vector spaces through the use of Filters Theory, and the basic properties of the derivative are demonstrated.

The Carathéodory derivative has been generalized to functions $f : \mathbb{R}^n \to \mathbb{R}^m$~\cite{acosta1994}, where the equivalence between the Fréchet derivative and Carathéodory derivative is proved as well as the Inverse Function Theorem. The Carathéodory derivative has been defined for functions between Banach spaces~\cite{arora2021, cabrales2006}. 

For example, in~\cite{arora2021}, using this generalization, Schwarz's Lemma on the equality of mixed partial derivatives and Banach's Fixed Point Theorem are proved, and in~\cite{cabrales2006}, the derivative of Carathéodory allows the determination of the derivative of certain norms in function spaces.

In~\cite{pinzon1999}, the equivalence in $\mathbb{R}^2$ between Gateaux, Fréchet and Carathéodory derivatives is presented using a special topology in $\mathbb{R}^2$ that guarantees the continuity of the Gateaux derivative (radial topology). 

Since the derivative of Carathéodory involves the topological components of continuity and algebraic of factorization, in~\cite{acosta1996} the derivative of Carathéodory is generalized functions with a domain and co-domain as topological groups, showing among other things the important chain rule, and sufficient conditions are given to the uniqueness of the derivative.

Let us remember that a topological group or continuous group results from the union of two important mathematical structures: the topological space and the group. The union is in the sense that the group operation and the inversion operation must be continuous with the given topology~\cite{dikranjan2003, pontriaguin1986, spindler1994}.

If in the definition of topological group we consider the metric structure in exchange for the topological structure~\cite{lima1993}, we then obtain a metric group, so it is possible to also have a definition of the Carathéodory differentiability for functions between metric groups. 

In~\cite{granada2006}, the definition of differentiability given in~\cite{acosta1996} is used, and a derivative is defined in metric groups. However, in~\cite{granada2006}, it was not possible to prove the chain rule.

However, in~\cite{granada2007}, a result related to the convergence of certain homomorphisms is presented, which is the key to being able to demonstrate the chain rule, and it is what we will show in this work.

In this paper, we return to the definition of differentiability between metric groups given in~\cite{granada2006}, and we endow the $\tilde{\mathrm{Hom}}(G; H)$ space with an adequate metric, showing the chain rule and the linearity of the derivative. In particular, to show the homogeneity of the derivative of a function between metric groups, we must choose the co-domain of that function as a topological vector space.

\section{Group of Continuous Homomorphisms and Group Metric}

In this section, we will endow the space of continuous homomorphisms with a metric and a binary operation so that this space is a group. The above will allow us to construct a derivative in metric groups.

The space of continuous homomorphisms between metric groups is defined as:
\[
\tilde{\mathrm{Hom}}(G; H) = \{\phi : G \to H : \phi \text{ is a continuous homomorphism in } e \in G\}
\]
where $e$ represents the identity element of the metric group $G$.

As the application $\sigma : G \to H$ defined as $\sigma[x] = e_H$ belongs to $\tilde{\mathrm{Hom}}(G; H)$, it follows that $\tilde{\mathrm{Hom}}(G; H) \neq \emptyset$.

The next definition is a generalization of the one presented in~\cite{lima1993} (p. 159), and we will use it to prove Propositions~\ref{prop1} and~\ref{prop5}.

\begin{definition}
Let $(H, \ast, d)$ be a metric group and $e \in H$ the identity element. We define by group metric the metric of $H$, if for $k$ fixed in $H$ and for all $x, y \in H$ we have:
\begin{enumerate}
\item[(1)]
$d(x \ast y, e) \leq d(x, e)d(y, e) + d(x, e) + d(y, e)$.
\item[(2)]
There exists $c_k > 0$ such that $d(x \ast k, y \ast k) \leq c_k d(x, y)$.
\end{enumerate}
\end{definition}

As examples of group metrics, we have, among others, the metric induced by the usual norm in $(\mathbb{C}^*, \cdot)$, and the metric induced by the usual norm in $(\mathbb{R}, +)$. 

In $(M_{n\times n}(\mathbb{R}), +)$, we define the usual metric as follows:
\[
d(X, Y) = \|X - Y\|
\]
Here $\|\cdot\|$ is the Euclidean norm. Therefore, $d$ is a metric group. To see this, one can observe that:
\[
d(X + Y, 0) = \|X + Y\| \leq \|X\| + \|Y\| \leq \|X\| \cdot \|Y\| + \|X\| + \|Y\| = d(X, 0)d(Y, 0) + d(X, 0) + d(Y, 0)
\]
On the other hand, if $K$ is fixed in $(M_{n\times n}(\mathbb{R}), +)$, we have that
\[
d(X + K, Y + K) = \|(X + K) - (Y + K)\| = \|X - Y\| = d(X, Y)
\]
Therefore, condition (2) in Definition~\ref{def1} is satisfied if we let $c_K = 1$.

In what follows, we consider $H$ as an Abelian metric group with a group metric.

\begin{definition}
In the set $\tilde{\mathrm{Hom}}(G, H)$, we define the following metric:
\[
\tilde{d}(\phi, \psi) = \sup_{t\in G} \frac{d_H(\phi[t], \psi[t])}{1 + d_H(\phi[t], \psi[t])}
\]
\end{definition}

Next we will endow $\tilde{\mathrm{Hom}}(G; H)$ with a binary operation, and later we will show that this space with this binary operation is an Abelian group.

\begin{definition}
We define in $\tilde{\mathrm{Hom}}(G; H)$ the following binary operation:
\[
\oplus: \tilde{\mathrm{Hom}}(G; H) \times \tilde{\mathrm{Hom}}(G; H) \to \tilde{\mathrm{Hom}}(G; H),\quad (\phi, \psi) \mapsto \phi \oplus \psi
\]
where $\phi \oplus \psi : G \to H$ is defined as $(\phi \oplus \psi)[x] = \phi[x] \ast \psi[x]$ and $\ast$ is the binary operation on $H$.
\end{definition}

\begin{proposition}\label{prop1}
$\oplus$ is well defined.
\end{proposition}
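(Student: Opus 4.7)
The plan is to prove the two things required for $\phi \oplus \psi$ to lie in $\tilde{\mathrm{Hom}}(G;H)$: (i) it is a homomorphism $G \to H$, and (ii) it is continuous at the identity $e \in G$. Both pieces go through once the hypotheses on $H$ (Abelian, equipped with a group metric in the sense of \defone{}) are used correctly.

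For (i), I would start from $(\phi \oplus \psi)[xy] = \phi[xy] \ast \psi[xy]$, expand by the homomorphism property of each factor to get $(\phi[x] \ast \phi[y]) \ast (\psi[x] \ast \psi[y])$, and then use associativity together with the commutativity of $(H, \ast)$ to regroup the four-term product as $(\phi[x] \ast \psi[x]) \ast (\phi[y] \ast \psi[y]) = (\phi \oplus \psi)[x] \ast (\phi \oplus \psi)[y]$. This is the only place in the proof where the Abelian hypothesis on $H$ is genuinely used.

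For (ii), I first note that since $\phi$ and $\psi$ are homomorphisms we have $(\phi \oplus \psi)[e] = e_H \ast e_H = e_H$, so continuity at $e$ reduces to showing $d_H((\phi \oplus \psi)[x], e_H) \to 0$ as $x \to e$. Condition~(1) of \defone{} supplies precisely the inequality that does the work:
\[
d_H(\phi[x] \ast \psi[x], e_H) \leq d_H(\phi[x], e_H)\, d_H(\psi[x], e_H) + d_H(\phi[x], e_H) + d_H(\psi[x], e_H).
\]
Since $\phi$ and $\psi$ are continuous at $e$ with $\phi[e] = \psi[e] = e_H$, each of $d_H(\phi[x], e_H)$ and $d_H(\psi[x], e_H)$ tends to $0$ as $x \to e$, so the whole right-hand side does as well, giving the required continuity.

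I do not expect a real obstacle: the axioms of \defone{} were engineered so that the triangle-like estimate in condition~(1) absorbs a product of two quantities converging to $e_H$. The only cosmetic care to take is to prove (i) before (ii), since the identity $(\phi \oplus \psi)[e] = e_H$ used at the start of (ii) is just the homomorphism property specialized to $x = y = e$.
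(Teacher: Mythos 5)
Your proposal is correct and follows essentially the same route as the paper: part (i) is the identical regrouping argument using associativity and commutativity of $H$, and part (ii) rests on the same key inequality, condition (1) of the group metric, applied to $d_H(\phi[x] \ast \psi[x], e_H)$ with both summands controlled by continuity of $\phi$ and $\psi$ at $e$ (the paper writes this out as an explicit $\varepsilon$--$\delta$ computation, but the content is the same limit argument you give). The only difference is that the paper also records the trivial checks that equal input pairs produce equal outputs, which you reasonably omit.
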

\begin{proof}
Let $(\phi, \psi), (\lambda, \theta) \in \tilde{\mathrm{Hom}}(G; H) \times \tilde{\mathrm{Hom}}(G; H)$ such that $(\phi, \psi) = (\lambda, \theta)$. Therefore, $\phi = \lambda$ y $\psi = \theta$. For $x \in G$, we have that $(\phi \oplus \psi)[x] = \phi[x] \ast \psi[x] = \lambda[x] \ast \theta[x] = (\lambda \oplus \theta)[x]$, i.e., $\phi \oplus \psi = \lambda \oplus \theta$, or $\oplus(\phi, \psi) = \oplus(\lambda, \theta)$. 

If $x = y \in G$, then of course $(\phi \oplus \psi)[x] = \phi[x] \ast \psi[x]$. As $\phi$ and $\psi$ are functions, then $\phi[x] = \phi[y]$ and $\psi[x] = \psi[y]$, and therefore, $(\phi \oplus \psi)[x] = (\phi \oplus \psi)[y]$.

Next, we claim that $\phi \oplus \psi \in \tilde{\mathrm{Hom}}(G; H)$. Let $x, y \in G$ be. Then:
\begin{enumerate}
\item[(i)]
\[
(\phi \oplus \psi)[x \cdot y] = \phi[x \cdot y] \ast \psi[x \cdot y] = \{\phi[x] \ast \phi[y]\} \ast \{\psi[x] \ast \psi[y]\} = \{\phi[x] \ast \psi[x]\} \ast \{\phi[y] \ast \psi[y]\} = (\phi \oplus \psi)[x] \ast (\phi \
\oplus \psi)[y].
\]
\item[(ii)]
As $\phi, \psi \in \tilde{\mathrm{Hom}}(G; H)$, $\phi$ and $\psi$ are continuous. Let $\epsilon > 0$ be and $\varepsilon = \min\{\epsilon, 1\}$. For $\varepsilon$ there are $\delta_1, \delta_2 > 0$ such that:
\[
d_G(x, e) < \delta_1 \implies d_H(\phi[x], e_H) < \varepsilon/2 \quad \text{and} \quad d_G(x, e) < \delta_2 \implies d_H(\psi[x], e_H) < \varepsilon/3.
\]
If we do $\delta = \min\{\delta_1, \delta_2\}$, then $d_G(x, e) < \delta$ implies:
\begin{align*}
d_H\bigl((\phi \oplus \psi)[x], e_H\bigr)
&= d_H\bigl(\phi[x] \ast \psi[x], e_H\bigr) \\
&\le d_H(\phi[x], e_H)\, d_H(\psi[x], e_H)
   + d_H(\phi[x], e_H)
   + d_H(\psi[x], e_H) \\
&< \frac{\varepsilon^2}{6}
   + \frac{\varepsilon}{2}
   + \frac{\varepsilon}{3} \\
&= \varepsilon^2 + \frac{5\varepsilon}{6}
 < \varepsilon \le \varepsilon.
\end{align*}

\end{enumerate}
\end{proof}

The next proposition will be used to prove the existence of invertible elements in $\tilde{\mathrm{Hom}}(G; H)$, and its proof is similar to the one presented in~\cite{arora2021}.

\begin{proposition}\label{prop2}
Let $\phi \in \tilde{\mathrm{Hom}}(G; H)$ and $h : G \to G$ be a continuous function in a $\in G$. Then
\[
\lim_{x\to a} \phi[h(x)] = \phi[h(a)].
\]
\end{proposition}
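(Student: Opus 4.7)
My plan is to reduce the statement to continuity of $\phi$ at the single point $b := h(a)$, and then to upgrade the assumed continuity of $\phi$ at $e$ to continuity at $b$ by combining the homomorphism law with condition (2) of \defone.

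First, since $h$ is continuous at $a$, as $x\to a$ we have $h(x)\to h(a)=b$ in $G$. Because $G$ is a metric group, so that multiplication and inversion are continuous, this in turn gives $h(x)\cdot b^{-1}\to e$ in $G$. Using that $\phi$ is a homomorphism, I would write $\phi[h(x)] = \phi[h(x)\cdot b^{-1}]\ast\phi[b]$, and then the assumed continuity of $\phi$ at $e$, together with the standard homomorphism identity $\phi[e]=e_H$, yields $\phi[h(x)\cdot b^{-1}]\to e_H$ as $x\to a$.

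To pass from this to $\phi[h(x)]\to\phi[b]$, I invoke condition (2) of the group metric on $H$ with $k=\phi[b]$:
\[
d_H\bigl(\phi[h(x)],\phi[b]\bigr) \;=\; d_H\bigl(\phi[h(x)\cdot b^{-1}]\ast\phi[b],\; e_H\ast\phi[b]\bigr) \;\le\; c_{\phi[b]}\, d_H\bigl(\phi[h(x)\cdot b^{-1}],\, e_H\bigr).
\]
The right-hand side tends to $0$, hence $\phi[h(x)]\to\phi[b]=\phi[h(a)]$, which is exactly the claimed limit.

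The main conceptual point, and the likely principal obstacle, is recognizing that the right-translation Lipschitz estimate in condition (2) is precisely what transports continuity from the identity to an arbitrary base point; continuity of $\phi$ only at $e$ would otherwise leave a gap, since $b$ may be far from $e$. Once that observation is isolated, the rest of the argument is bookkeeping, and it should indeed parallel the proof in~\cite{arora2021} cited by the authors.
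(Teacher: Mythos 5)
Your proof is correct, and it is genuinely more complete than the one the paper gives. The paper disposes of this proposition in a single sentence --- ``the proof follows from the continuity of $\phi$ at $h(a)$ and continuity of $h$ at $a$'' --- i.e.\ it treats the statement as a bare composition-of-continuous-maps fact, silently assuming that $\phi$ is continuous at the point $h(a)$. But the definition of $\tilde{\mathrm{Hom}}(G;H)$ only requires continuity at the identity $e$ (and the paper's own verification in Proposition~\ref{prop1}(ii) checks continuity only there), so the paper's one-liner leaves exactly the gap you identified. Your argument closes it: the factorization $\phi[h(x)]=\phi[h(x)\cdot b^{-1}]\ast\phi[b]$ with $b=h(a)$, the continuity of right translation in the metric group $G$ to get $h(x)b^{-1}\to e$, and then condition (2) of the group metric on $H$ with $k=\phi[b]$ to convert $d_H\bigl(\phi[h(x)b^{-1}],e_H\bigr)\to 0$ into $d_H\bigl(\phi[h(x)],\phi[b]\bigr)\to 0$. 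Each step checks out (note that $e_H\ast\phi[b]=\phi[b]$ is what makes the translation estimate apply verbatim). What your route buys is that the proposition genuinely holds under the stated hypotheses --- continuity of $\phi$ at $e$ alone --- at the cost of invoking the group-metric axiom and the homomorphism law; what the paper's route buys is brevity, at the cost of either an unstated lemma (continuity at $e$ implies continuity everywhere for homomorphisms into a group with a group metric) or an implicit strengthening of the definition of $\tilde{\mathrm{Hom}}(G;H)$ to full continuity. Your version is the one that should be recorded.
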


The proof follows from the continuity of $\phi$ at $h(a)$ and continuity of $h$ at $a$.

\begin{proposition}\label{prop3}
$(\tilde{\mathrm{Hom}}(G; H), \oplus)$ is Abelian group.
\end{proposition}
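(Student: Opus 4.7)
The plan is to verify the four group axioms in turn, leaning on the fact that $(H,\ast)$ is already an Abelian group and that Proposition~\ref{prop1} gives closure of $\oplus$. Associativity and commutativity are essentially inherited pointwise: for any $x\in G$,
\[
((\phi\oplus\psi)\oplus\chi)[x]=(\phi[x]\ast\psi[x])\ast\chi[x]=\phi[x]\ast(\psi[x]\ast\chi[x])=(\phi\oplus(\psi\oplus\chi))[x],
\]
and similarly $(\phi\oplus\psi)[x]=\phi[x]\ast\psi[x]=\psi[x]\ast\phi[x]=(\psi\oplus\phi)[x]$, using associativity and commutativity of $\ast$ on $H$. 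So these two axioms reduce to one-line pointwise verifications.

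For the identity, I would take the constant map $\sigma:G\to H$, $\sigma[x]=e_H$, which was already noted at the start of the section to lie in $\tilde{\mathrm{Hom}}(G;H)$. Then $(\phi\oplus\sigma)[x]=\phi[x]\ast e_H=\phi[x]$ for every $x$, so $\sigma$ is a two-sided identity (commutativity makes one side suffice). This step is routine.

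The step that needs actual work is the existence of inverses. Given $\phi\in\tilde{\mathrm{Hom}}(G;H)$, I would define $\phi^{\ominus}:G\to H$ by $\phi^{\ominus}[x]=(\phi[x])^{-1}$ (inverse taken in $H$) and then check three things: that $\phi^{\ominus}$ is a homomorphism, that it is continuous at $e\in G$, and that $\phi\oplus\phi^{\ominus}=\sigma$. The homomorphism property is where Abelianness of $H$ enters explicitly:
\[
\phi^{\ominus}[x\cdot y]=(\phi[x]\ast\phi[y])^{-1}=\phi[y]^{-1}\ast\phi[x]^{-1}=\phi[x]^{-1}\ast\phi[y]^{-1}=\phi^{\ominus}[x]\ast\phi^{\ominus}[y].
\]
The identity equation $\phi\oplus\phi^{\ominus}=\sigma$ is then immediate at each point.

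The main obstacle, and the only place that uses the metric structure substantively, is continuity of $\phi^{\ominus}$ at $e\in G$. For this I would apply Proposition~\ref{prop2} with $h:H\to H$ the inversion map (or, equivalently, directly use continuity of inversion in the metric group $H$ together with continuity of $\phi$ at $e$ to conclude $\phi[x]^{-1}\to e_H^{-1}=e_H$ as $x\to e$). If a quantitative argument is preferred, condition (2) of \defone{} combined with $\phi[x]\ast\phi[x]^{-1}=e_H$ bounds $d_H(\phi[x]^{-1},e_H)$ by a constant multiple of $d_H(\phi[x],e_H)$, which tends to $0$ by continuity of $\phi$ at $e$. Either route yields $\phi^{\ominus}\in\tilde{\mathrm{Hom}}(G;H)$, completing the verification of the group axioms.
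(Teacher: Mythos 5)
Your proof is correct and takes essentially the same route as the paper: pointwise verification of associativity and commutativity inherited from $H$, the constant map $\sigma$ as the identity, and a pointwise inverse whose membership in $\tilde{\mathrm{Hom}}(G;H)$ is the only point needing care. The paper writes the inverse as $\phi^{-1}[x]=\phi[x^{-1}]$ and obtains continuity at $e$ from Proposition~\ref{prop2} applied to $\mathrm{inv}:G\to G$ (note that Proposition~\ref{prop2} is stated for $h:G\to G$, not $h:H\to H$ as you first suggest), whereas your $\phi[x]^{-1}$ is the same map for a homomorphism and your fallback via continuity of inversion in the metric group $H$ is equally valid; only your optional quantitative aside is shaky, because the constant $c_k$ in condition (2) of \defone{} depends on $k$, which in that estimate would have to vary with $x$.
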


In effect, the application $\sigma : G \to H$ defined as $\sigma[x] = e_H$ for all $x \in G$, is the module of $(\tilde{\mathrm{Hom}}(G; H), \oplus)$. The function $\phi^{-1} : G \to H$ defined by $\phi^{-1}[x] = \phi[x^{-1}]$ is the inverse of $\phi$. 

We also have that $\phi^{-1}$ is a homomorphism of $G$ in $H$. The associativity and commutativity in $H$ allow us to see that the operation $\oplus$ is associative and commutative.

Furthermore, since $G$ is a metric group then the $\mathrm{inv} : G \to G$ defined by $\mathrm{inv}(x) = x^{-1}$ function is continuous, and by the Proposition~\ref{prop2} statement we have:
\[
\lim_{x\to e} \phi^{-1}[x] = \lim_{x\to e} \phi[x^{-1}] = \lim_{x\to e} \phi[\mathrm{inv}(x)] = \lim_{x\to e} \phi[\mathrm{inv}(e)] = \phi[e^{-1}] = \phi^{-1}[e]
\]
We have thus shown that $\phi^{-1} \in \tilde{\mathrm{Hom}}(G; H)$.

We use the following theorem to prove the linearity of the derivative.

\begin{theorem}\label{thm1}
Let $G$ and $H$ be metric groups, $G$ locally compact, $\phi : G \to \tilde{\mathrm{Hom}}(G; H)$. Let $t \in G$ be fixed. Then
\[
\lim_{x\to a} \phi(x)[t] = \phi(a)[t] \implies \lim_{x\to a} \phi(x) = \phi(a)
\]
that is, $\phi$ is continuous at $a$.
\end{theorem}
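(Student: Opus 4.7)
The plan is to translate the hypothesis into a statement about one auxiliary homomorphism approaching the trivial homomorphism, and then to exploit local compactness of $G$ to push pointwise information at $t$ up to the $\tilde d$-metric.

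First I would reduce to the zero case. By Propositions~\ref{prop1} and~\ref{prop3}, the element $\theta_x := \phi(x)\oplus \phi(a)^{-1}$ lies in $\tilde{\mathrm{Hom}}(G;H)$ and the hypothesis becomes $\theta_x[t]\to e_H$ as $x\to a$. Using property~(2) of Definition~\ref{def1}, $\oplus$-translation by $\phi(a)$ is bi-Lipschitz in the pointwise metric $d_H$, so (after checking that the bound $u\mapsto u/(1+u)$ survives the translation) the goal $\tilde d(\phi(x),\phi(a))\to 0$ is equivalent to $\tilde d(\theta_x,\sigma)\to 0$, where $\sigma$ is the trivial homomorphism of Proposition~\ref{prop3}.

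Next I would use local compactness to fix a compact neighborhood $K$ of $e\in G$ and argue that the family $\{\theta_x\}$ is equicontinuous on $K$. Since each $\theta_x$ is a continuous homomorphism at $e$, property~(1) of Definition~\ref{def1} lets a bound on $d_H(\theta_x[u],e_H)$ for $u$ in a small neighborhood of $e$ propagate, via products, to a bound on $d_H(\theta_x[s],e_H)$ for all $s\in K$. Combined with the pointwise hypothesis $\theta_x[t]\to e_H$, an Arzel\`a--Ascoli-type argument should give uniform convergence $\sup_{s\in K} d_H(\theta_x[s],e_H)\to 0$ as $x\to a$. Proposition~\ref{prop2} is the natural tool for combining continuity at $e$ with the assumed convergence at the single fixed point $t$.

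The main obstacle is the final step: the metric $\tilde d$ is a \emph{global} sup over $G$, not a sup over $K$. To extend the compact-set bound to all of $G$, I would use divisibility of $G$, together with the homomorphism identity $\theta_x[s_1 s_2]=\theta_x[s_1]\ast\theta_x[s_2]$ and the group-metric inequality (1) of Definition~\ref{def1}, so that an arbitrary $s\in G$ can be expressed through factors controlled on $K$. The bounded form $d_H/(1+d_H)\le 1$ in the definition of $\tilde d$ is what keeps this feasible: contributions from ``far-away'' $s$ are automatically capped, so one really only needs uniform smallness on a sufficiently rich compact piece. This local-to-global transfer is almost certainly where the convergence result for homomorphisms from~\cite{granada2007}, advertised in the introduction as the missing ingredient for the chain rule, is meant to enter.
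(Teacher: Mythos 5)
The paper does not actually prove Theorem~\ref{thm1}: it is stated without proof, and the introduction makes clear that this convergence result is imported wholesale from~\cite{granada2007}. So there is nothing in the text to compare your argument to; what matters is whether your outline would actually close, and I do not think it does. The first genuine gap is the Arzel\`a--Ascoli step: you need the family $\{\theta_x\}$ to be equicontinuous near $e$, but the only available hypothesis is that each \emph{individual} $\phi(x)$ is continuous at $e$. Nothing in the statement, in Definition~2.1, or in Propositions~\ref{prop1}--\ref{prop3} supplies a modulus of continuity that is uniform in $x$, and without that the passage from pointwise convergence to uniform convergence on a compact neighborhood $K$ simply has no engine. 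A second, independent problem is the hypothesis itself: it is stated for a \emph{single} fixed $t$, and if one takes $t=e$ it is vacuously true (every homomorphism sends $e$ to $e_H$), so as literally stated the implication would force every pointwise-defined $\phi$ to be continuous. Your argument tacitly upgrades the hypothesis to ``for all $t$''; you should flag that you are doing so, since the single-point version cannot be salvaged.

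The local-to-global step is where the proposal most clearly fails. The metric $\tilde d$ is a supremum over all of $G$, and your claim that the bounded form $d_H/(1+d_H)\le 1$ means ``contributions from far-away $s$ are automatically capped'' is a misreading: to get $\tilde d(\theta_x,\sigma)<\epsilon$ you need $d_H(\theta_x[s],e_H)/(1+d_H(\theta_x[s],e_H))<\epsilon$ for \emph{every} $s\in G$, and the cap at $1$ does nothing toward that. Writing an arbitrary $s$ as a product of factors from $K$ requires $K$ to generate $G$ (local compactness does not give this) and uses an unbounded number $n(s)$ of factors, while inequality~(1) of Definition~2.1 compounds as $1+d(\theta_x[s],e_H)\le(1+\epsilon)^{n(s)}$, which is not uniformly small over $G$. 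Divisibility, which you invoke for this step, is not even a hypothesis of Theorem~\ref{thm1} (it enters only in Definition~\ref{def5} for uniqueness of the derivative). Finally, the reduction to $\theta_x=\phi(x)\oplus\phi(a)^{-1}$ uses translation constants $c_k$ from Definition~2.1(2) with $k=(\phi(a)[t])^{-1}$ depending on $t$, so that comparison also fails to pass through the supremum uniformly. In short: the theorem as stated is at best unproved here, and your outline does not fill the hole.
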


\section{Construction of a Derivative in Metric Groups}

\begin{definition}[Definition 4]\label{def4}
Let $G$ and $H$ be metric groups. A function $f : G \to H$ is differentiable at $a \in G$ if there is a neighborhood $V_a$ of $a$ and a function $\phi_f : G \to \tilde{\mathrm{Hom}}(G; H)$ continuous at $a$ such that:
\[
f(x) f(a)^{-1} = \phi_f(x)[x a^{-1}] \quad \text{for all } x \in V_a.
\]
The derivative of $f$ at $a$ is $\phi_f(a)$, and $\phi_f$ is a slope function for $f$ at $a$. The latter is Carathéodory's characterization of differentiability between metric groups.
\end{definition}

In this definition, $\tilde{\mathrm{Hom}}(G; H)$ is endowed with the metric given in Definition~\ref{def2}.

To guarantee the uniqueness of the derivative of a function $f : G \to H$, we use the following definition.

\begin{definition}[Definition 5]\label{def5}
A group $G$ is divisible if for every element $g \in G$ and every $n \in \mathbb{N}$ there exists $x \in G$ such that the equation $g = x^n$ has only one solution.

Furthermore, we impose the condition $\lim_{n\to\infty} x^{1/n} = e$ for any $x \in G$.

If the operation of the group $G$ is given in additive form, we will write $g = n x$ and $\lim_{n\to\infty} x/n = e$.
\end{definition}

For example, $(\mathbb{R}, +)$, $(S^1, \cdot)$, $(M_{n\times n}(\mathbb{R}), +)$ are divisible groups.

The next proposition will help us to show the uniqueness of the derivative.

\begin{proposition}\label{prop4}
Let $G$ be a divisible metric group and $\phi : G \to \tilde{\mathrm{Hom}}(G; H)$ continuous on $a \in G$. Then $\lim_{n\to\infty} \phi(z^{1/n} a) = \phi(a)$.
\end{proposition}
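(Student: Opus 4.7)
The plan is to reduce the conclusion directly to the continuity of $\phi$ at $a$, once one has shown that the argument sequence $z^{1/n}\, a$ tends to $a$ in $G$. So the proof is a three-step composition of continuity statements glued together by the divisibility hypothesis.

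First I would invoke Definition~\ref{def5}: the divisibility hypothesis explicitly imposes the condition $\lim_{n\to\infty} x^{1/n} = e$ for every $x \in G$, which, applied with $x = z$, yields $z^{1/n} \to e$ in $G$. Because $G$ is a metric group, multiplication (and in particular right translation by the fixed element $a$) is continuous, so
\[
z^{1/n}\, a \;\longrightarrow\; e\, a \;=\; a \qquad \text{in } G.
\]
Next, $\phi : G \to \tilde{\mathrm{Hom}}(G;H)$ is continuous at $a$, and here continuity is read with respect to the metric $\tilde d$ of Definition~\ref{def2} on the target. Composing this with the previous limit gives
\[
\tilde d\bigl(\phi(z^{1/n}\, a),\, \phi(a)\bigr) \;\longrightarrow\; 0,
\]
which is precisely the statement $\lim_{n\to\infty} \phi(z^{1/n}\, a) = \phi(a)$.

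There is no real obstacle here; the role of the divisibility condition in Definition~\ref{def5} is merely to supply a concrete nullsequence $(z^{1/n})$ in $G$, and the rest is continuity of multiplication followed by continuity of $\phi$. In particular, one need not unpack the supremum defining $\tilde d$, nor appeal to Proposition~\ref{prop2} or Theorem~\ref{thm1}: the fact that $\phi$ takes values in $\tilde{\mathrm{Hom}}(G;H)$ endowed with $\tilde d$ already encodes the desired convergence as soon as one uses that $\phi$ is continuous at $a$ in that metric. The one pedantic point worth flagging is the appeal to continuity of the group operation in $G$, which is part of the standing notion of a metric group but is not verified explicitly in the preceding portion of the paper.
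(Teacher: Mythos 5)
Your argument is correct and is essentially the paper's own proof: the paper likewise combines the divisibility condition $z^{1/n}\to e$, the continuity of the right translation $\lambda_a(x)=xa$ (a homeomorphism of $G$), and the continuity of $\phi$ at $a$. You have simply spelled out the composition of limits that the paper states in one sentence.
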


Indeed, since the $\lambda_a : G \to G$ defined by $\lambda_a(x) = x a$ is continuous at $G$ (because it is homeomorphism) and $\phi$ is continuous at $a$, the result follows as $G$ is divisible.

Regarding the uniqueness of the derivative, we have the following result, proof of which can be found in~\cite{granada2006}.

\begin{theorem}[Uniqueness of the derivative]\label{thm2}
Let $G$ be a divisible metric group, $H$ Abelian metric group with a group metric. If $f : G \to H$ is differentiable at $a \in G$, then $\phi_f(a)$ is unique.
\end{theorem}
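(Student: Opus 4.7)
The plan is to assume $f$ has two slope functions $\phi_f$ and $\psi_f$ at $a$ and to deduce $\phi_f(a) = \psi_f(a)$ as elements of $\tilde{\mathrm{Hom}}(G;H)$, which amounts to $\phi_f(a)[t] = \psi_f(a)[t]$ for every fixed $t \in G$. Definition~\ref{def4} supplies, on a common neighbourhood $V_a$ of $a$, the identity
$$\phi_f(x)[xa^{-1}] = f(x)f(a)^{-1} = \psi_f(x)[xa^{-1}].$$
I will exploit this along a sequence $x_n \to a$ built from the test point $t$ by divisibility, and then pass to the limit.

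First I would fix $t \in G$ and set $x_n := t^{1/n}\,a$, where $t^{1/n}$ is the unique solution of $y^n = t$ granted by Definition~\ref{def5}. Since $t^{1/n} \to e$, continuity of right multiplication by $a$ in $G$ gives $x_n \to a$, so $x_n \in V_a$ eventually. Because $x_n a^{-1} = t^{1/n}$, the slope identity reduces to $\phi_f(x_n)[t^{1/n}] = \psi_f(x_n)[t^{1/n}]$. Since $\phi_f(x_n)$ and $\psi_f(x_n)$ are homomorphisms and $(t^{1/n})^n = t$, raising both sides to the $n$-th power in $H$ yields
$$\phi_f(x_n)[t] = \bigl(\phi_f(x_n)[t^{1/n}]\bigr)^{n} = \bigl(\psi_f(x_n)[t^{1/n}]\bigr)^{n} = \psi_f(x_n)[t].$$

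To finish, continuity of $\phi_f$ and $\psi_f$ at $a$ in the metric $\tilde{d}$ of Definition~2, together with Proposition~\ref{prop4} applied with $z = t$, gives $\phi_f(x_n) \to \phi_f(a)$ and $\psi_f(x_n) \to \psi_f(a)$ in $\tilde{d}$. Since for each fixed $t$ the term $d_H(\alpha[t],\beta[t])/(1+d_H(\alpha[t],\beta[t]))$ is dominated by $\tilde{d}(\alpha,\beta)$, and $u \mapsto u/(1+u)$ is a homeomorphism of $[0,\infty)$ onto $[0,1)$ fixing $0$, convergence in $\tilde{d}$ forces pointwise convergence at $t$. Passing to the limit in the displayed equality therefore yields $\phi_f(a)[t] = \psi_f(a)[t]$, and as $t$ was arbitrary this is the desired equality.

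The main obstacle is the homomorphism-and-divisibility step, where the equality of values at the single element $t^{1/n}$ must be lifted to an equality at $t$. This is precisely where uniqueness of $n$-th roots in Definition~\ref{def5} is essential, since it gives a well-defined $t^{1/n}$ satisfying $(t^{1/n})^n = t$; the companion condition $t^{1/n} \to e$ is what ensures $x_n$ returns to $a$ so that continuity of the slope functions can be invoked. Together these two features of divisibility play the role that scaling by $1/n$ plays in the classical real-variable proof.
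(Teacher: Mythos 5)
Your proposal is correct and follows exactly the route the paper intends: the paper defers the proof to its reference~\cite{granada2006} but introduces Proposition~\ref{prop4} (the limit $\phi(z^{1/n}a)\to\phi(a)$) precisely as the tool for uniqueness, and your argument --- evaluating both slope functions at $x_n=t^{1/n}a$, lifting the equality from $t^{1/n}$ to $t$ via the homomorphism property and unique $n$-th roots, then passing to the limit using $\tilde d$-continuity --- is that same argument.
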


Proposition~\ref{prop5} and Theorem~\ref{thm3} will allow us to show that differentiability implies continuity.

\begin{proposition}\label{prop5}
Let $\phi : G \to H$ be continuous at $a \in G$ and $k \in H$. Then $\lim_{x\to a} k \ast \phi(x) = k \ast \phi(a)$.
\end{proposition}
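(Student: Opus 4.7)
The plan is to translate continuity of $\phi$ at $a$ into continuity of $k \ast \phi(\cdot)$ at $a$ by exploiting the second clause of the group metric hypothesis on $H$. Concretely, condition (2) of \defone{} guarantees that for the fixed element $k \in H$ there is a constant $c_k > 0$ with $d_H(u \ast k, v \ast k) \le c_k\, d_H(u, v)$ for all $u, v \in H$, and since $H$ is Abelian, this rewrites as
\[
d_H(k \ast u,\, k \ast v) \le c_k\, d_H(u, v).
\]
In other words, left translation by $k$ is Lipschitz with constant $c_k$, and hence continuous.

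Given $\epsilon > 0$, I would set $\epsilon' = \epsilon / c_k$ and invoke continuity of $\phi$ at $a$ to obtain $\delta > 0$ such that $d_G(x, a) < \delta$ implies $d_H(\phi(x), \phi(a)) < \epsilon'$. Applying the displayed inequality above with $u = \phi(x)$ and $v = \phi(a)$ then yields
\[
d_H\bigl(k \ast \phi(x),\, k \ast \phi(a)\bigr) \le c_k\, d_H(\phi(x), \phi(a)) < c_k \cdot \frac{\epsilon}{c_k} = \epsilon,
\]
which establishes the desired limit.

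There is no real obstacle here; the content of the proposition is precisely the observation that translation by a fixed element is continuous when the metric on $H$ is a group metric in the sense of \defoneprop. The only subtlety worth flagging is that condition~(2) is formally stated for right multiplication; commutativity of $H$ (assumed throughout this section) is what lets us apply the same estimate to $k \ast \phi(x)$. If one wished to drop the Abelian hypothesis, one would need an analogous Lipschitz bound for left translation, but under the standing assumptions this is free.
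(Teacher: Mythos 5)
Your proposal is correct and follows essentially the same argument as the paper: invoke condition (2) of the group metric to get the Lipschitz bound $d_H(k \ast \phi(x), k \ast \phi(a)) \leq c_k\, d_H(\phi(x), \phi(a))$, then feed $\varepsilon / c_k$ into the continuity of $\phi$ at $a$. Your remark that condition (2) is stated for right translation and that commutativity of $H$ is what justifies applying it to $k \ast \phi(x)$ is a small point of care that the paper's proof glosses over, but it does not change the argument.
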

\begin{proof}
Since $H$ has a group metric, for $x, y \in H$ and $k \in H$ there is $c_k > 0$ such that $d_H(k \ast x, k \ast y) \leq c_k d_H(x, y)$. Because $\phi$ is continuous at $a$, for $\varepsilon > 0$ there is $\delta > 0$ such that:
\[
d_G(x, a) < \delta \implies d_H(\phi(x), \phi(a)) < \frac{\varepsilon}{c_k}.
\]
So, if $d_G(x, a) < \delta$ then $d_H(k \ast \phi(x), k \ast \phi(a)) \leq c_k d_H(\phi(x), \phi(a)) < \varepsilon$, i.e.,
\[
\lim_{x\to a} k \ast \phi(x) = k \ast \phi(a).
\]
\end{proof}

The proof of Theorem~\ref{thm3} is similar to those in~\cite{arora2021} and will be omitted.

\begin{theorem}\label{thm3}
Let $\phi : G \to \tilde{\mathrm{Hom}}(G; H)$ be a continuous function at $a \in G$, $h : G \to G$ continuous at $a$. Then $\lim_{x\to a} \phi(x)[h(x)] = \phi(a)[h(a)]$.
\end{theorem}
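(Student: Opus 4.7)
The plan is to estimate $d_H(\phi(x)[h(x)], \phi(a)[h(a)])$ via the triangle inequality in $H$,
\[
d_H\bigl(\phi(x)[h(x)], \phi(a)[h(a)]\bigr) \le d_H\bigl(\phi(x)[h(x)], \phi(a)[h(x)]\bigr) + d_H\bigl(\phi(a)[h(x)], \phi(a)[h(a)]\bigr),
\]
and then to show that each summand tends to $0$ as $x \to a$. The first summand is driven by continuity of $\phi : G \to \tilde{\mathrm{Hom}}(G; H)$ at $a$ with respect to $\tilde d$; the second, by the fact that $\phi(a)$ is itself a continuous homomorphism composed with the continuous map $h$.

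For the first summand I would invoke the definition of the metric $\tilde d$ on $\tilde{\mathrm{Hom}}(G; H)$: since the supremum over $t \in G$ dominates the value at $t = h(x)$,
\[
\frac{d_H\bigl(\phi(x)[h(x)], \phi(a)[h(x)]\bigr)}{1 + d_H\bigl(\phi(x)[h(x)], \phi(a)[h(x)]\bigr)} \le \tilde d\bigl(\phi(x), \phi(a)\bigr).
\]
Because $u \mapsto u/(1+u)$ is a strictly increasing bijection $[0,\infty) \to [0,1)$, this bound can be inverted: given $\varepsilon > 0$, the condition $\tilde d(\phi(x),\phi(a)) < \varepsilon/(1+\varepsilon)$ forces $d_H(\phi(x)[h(x)],\phi(a)[h(x)]) < \varepsilon$. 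By continuity of $\phi$ at $a$, $\tilde d(\phi(x),\phi(a)) \to 0$, so the first summand vanishes. For the second summand, note that $\phi(a) \in \tilde{\mathrm{Hom}}(G;H)$ is a continuous homomorphism at $e$ and $h$ is continuous at $a$; applying Proposition~\ref{prop2} with $\phi(a)$ in place of $\phi$ yields $\lim_{x\to a} \phi(a)[h(x)] = \phi(a)[h(a)]$, so this summand also tends to $0$.

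The main difficulty is the first summand: because $t = h(x)$ moves with $x$, one cannot simply appeal to pointwise convergence $\phi(x)[t] \to \phi(a)[t]$ for each fixed $t$. The supremum in the definition of $\tilde d$ is precisely what supplies the uniform-in-$t$ estimate needed, and the monotone change of variable $u \mapsto u/(1+u)$ is what translates a small $\tilde d$-distance into a small $d_H$-distance. It is worth remarking that, beyond the triangle inequality, no further property of $H$ is used in the argument; the group-metric axioms and the abelianness of $H$ play no role at this stage.
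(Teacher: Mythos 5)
Your proof is correct. The paper actually omits the proof of Theorem~\ref{thm3}, deferring to~\cite{arora2021}, but your argument --- the triangle-inequality split in $H$, the supremum in $\tilde d$ giving a bound at the moving point $t=h(x)$ that is then inverted through the increasing map $u\mapsto u/(1+u)$, and Proposition~\ref{prop2} applied to $\phi(a)$ for the second term --- is exactly the intended argument, and the $\tilde d$-to-$d_H$ inversion is the same device the paper itself uses inside the proof of Theorem~\ref{thm4}.
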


We will use Theorem~\ref{thm4} to prove the additivity of the derivative.

\begin{theorem}\label{thm4}
Let $\phi, \psi : G \to \tilde{\mathrm{Hom}}(G; H)$ be continuous in $a \in G$, $d_{H\times H}$, the product metric defined by $d_{H\times H}((x, y), (a, b)) = d_H(x, a) + d_H(y, b)$. If $w(x) = \phi(x) \oplus \psi(x)$ and $m \in G$, then $\lim_{x\to a} w(x)[m] = w(a)[m]$.
\end{theorem}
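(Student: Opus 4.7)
The plan is to factor the scalar-valued limit through the product space $H\times H$ and reduce it to continuity of the group operation at a single point. Writing $u_0 := \phi(a)[m]$ and $v_0 := \psi(a)[m]$, the definition of $\oplus$ turns the claim into
\[
\lim_{x\to a} \bigl(\phi(x)[m] \ast \psi(x)[m]\bigr) = u_0 \ast v_0.
\]
So I would first establish that the ``evaluation at $m$'' maps $x \mapsto \phi(x)[m]$ and $x \mapsto \psi(x)[m]$, regarded as functions $G \to H$, are continuous at $a$. This is where the specific shape of $\tilde{d}$ pays off: the supremum defining $\tilde{d}(\phi(x),\phi(a))$ dominates the term at $t = m$, so
\[
\frac{d_H(\phi(x)[m], u_0)}{1 + d_H(\phi(x)[m], u_0)} \;\leq\; \tilde{d}(\phi(x),\phi(a)) \;\longrightarrow\; 0,
\]
and since $r \mapsto r/(1+r)$ is a homeomorphism $[0,\infty)\to[0,1)$ sending $0$ to $0$, we obtain $d_H(\phi(x)[m], u_0)\to 0$. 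The same argument applies to $\psi$, so $(\phi(x)[m], \psi(x)[m]) \to (u_0,v_0)$ in the product metric $d_{H\times H}$.

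Next, I would insert the intermediate point $u_0 \ast \psi(x)[m]$ and split by the triangle inequality:
\[
d_H\bigl(\phi(x)[m] \ast \psi(x)[m],\, u_0 \ast v_0\bigr)
\leq d_H\bigl(\phi(x)[m] \ast \psi(x)[m],\, u_0 \ast \psi(x)[m]\bigr)
+ d_H\bigl(u_0 \ast \psi(x)[m],\, u_0 \ast v_0\bigr).
\]
The second summand tends to zero by Proposition~\ref{prop5} applied with the fixed element $k = u_0$ to the continuous map $x \mapsto \psi(x)[m]$ obtained in the previous step. For the first summand, I would use commutativity of $H$ to bring $\psi(x)[m]$ to the right of both products, so that condition~(2) of the group metric (Definition~\ref{def1}) applies with $k = \psi(x)[m]$ and gives
\[
d_H\bigl(\phi(x)[m] \ast \psi(x)[m],\, u_0 \ast \psi(x)[m]\bigr)
\leq c_{\psi(x)[m]}\, d_H\bigl(\phi(x)[m], u_0\bigr).
\]

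The factor $d_H(\phi(x)[m], u_0)\to 0$ by the first step, so the only real obstacle is controlling the Lipschitz constant $c_{\psi(x)[m]}$, which nominally depends on $x$ through $\psi(x)[m]$. I expect this to be the main technical wrinkle: the definition of a group metric only asserts the existence of some $c_k$ for each $k$, and strictly speaking one needs these constants to remain bounded as $k$ varies in a neighborhood of $v_0$. In every example of group metric presented in the paper (the translation-invariant metric on $(M_{n\times n}(\mathbb{R}),+)$, and the usual metrics on $(\mathbb{R},+)$ and $(\mathbb{C}^*,\cdot)$), the map $k \mapsto c_k$ is continuous, so this local boundedness is automatic. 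Granting it, the first summand also tends to zero, and combining the two summands finishes the proof.
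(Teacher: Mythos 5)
Your first step---using that $\tilde{d}(\phi(x),\phi(a))$ dominates $\frac{d_H(\phi(x)[m],\phi(a)[m])}{1+d_H(\phi(x)[m],\phi(a)[m])}$ and inverting $r\mapsto r/(1+r)$ to get $\phi(x)[m]\to\phi(a)[m]$ and $\psi(x)[m]\to\psi(a)[m]$---is exactly the paper's argument. Where you diverge is in passing from convergence of the pair $(\phi(x)[m],\psi(x)[m])$ to convergence of the product, and there your route has a genuine gap, which you yourself flag but do not close: after splitting off the intermediate point $u_0\ast\psi(x)[m]$, the first summand is controlled by $c_{\psi(x)[m]}\,d_H(\phi(x)[m],u_0)$, and Definition~\ref{def1}(2) asserts only that \emph{some} constant $c_k$ exists for each fixed $k$; nothing guarantees that $k\mapsto c_k$ is locally bounded near $v_0$. ``Granting it'' is an extra hypothesis, not a consequence of the definitions, and reshuffling the split so that the translation is by the fixed element $v_0$ does not help---the other summand then carries the moving constant $c_{\phi(x)[m]}$ instead.

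The gap is entirely avoidable, and this is precisely what the paper does: since $H$ is a metric \emph{group}, the operation $\ast : H\times H\to H$ is jointly continuous with respect to the product metric $d_{H\times H}$ by definition, so once you know $(\phi(x)[m],\psi(x)[m])\to(u_0,v_0)$ in $d_{H\times H}$ you may conclude $\phi(x)[m]\ast\psi(x)[m]\to u_0\ast v_0$ directly, with no appeal to Proposition~\ref{prop5}, to commutativity, or to the group-metric axioms at all. In short: keep your first step verbatim, delete the triangle-inequality decomposition, and replace it with the continuity of multiplication at the single point $(u_0,v_0)$ (the $\varepsilon$--$\delta_1$ bookkeeping in the paper is exactly this composition of limits).
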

\begin{proof}[Proof of Theorem~\ref{thm4}]
Since $H$ is a metric space, we have that
\[
\lim_{(r,t)\to(c,d)} r \ast t = c \ast d.
\]
Let $m \in G$ be fixed. If we perform in the previous limit $r = \phi(x)[m]$, $t = \psi(x)[m]$, $c = \phi(a)[m]$, $d = \psi(a)[m]$, then by the definition of limit, for $\epsilon > 0$ there exists $\delta_1 > 0$ such that:
\[
d_{H\times H}((\phi(x)[m], \psi(x)[m]), (\phi(a)[m], \psi(a)[m])) = d_H(\phi(x)[m], \phi(a)[m]) + d_H(\psi(x)[m], \psi(a)[m]) < \delta_1
\]
which implies that $d_H(\phi(x)[m] \ast \psi(a)[m], \phi(a)[m] \ast \psi(a)[m]) < \epsilon$.

Since $\phi, \psi$ are continuous at $a$, then for $\delta_1/2 + \delta_1$ there exists $\delta_2, \delta_3 > 0$ such that:
\[
d_G(x, a) < \delta_2 \implies \tilde{d}(\psi(x), \psi(a)) < \frac{\delta_1}{2 + \delta_1}, \quad d_G(x, a) < \delta_3 \implies \tilde{d}(\phi(x), \phi(a)) < \frac{\delta_1}{2 + \delta_1}.
\]
If we let $\delta = \min\{\delta_2, \delta_3\}$, then $d_G(x, a) < \delta$ implies that:
\[
\frac{d_H(\phi(x)[m], \phi(a)[m])}{1 + d_H(\phi(x)[m], \phi(a)[m])} < \tilde{d}(\phi(x), \phi(a)) < \frac{\delta_1}{2 + \delta_1},
\]
from which $d_H(\phi(x)[m], \phi(a)[m]) < \delta_1/2$. Similarly, $d_G(x, a) < \delta$ implies $d_H(\psi(x)[m], \psi(a)[m]) < \delta_1/2$.

Then, $d_G(x, a) < \delta$ implies:
\[
d_H(\phi(x)[m], \phi(a)[m]) + d_H(\psi(x)[m], \psi(a)[m]) = d_{H\times H}((\phi(x)[m], \psi(x)[m]), (\phi(a)[m], \psi(a)[m])) < \delta_1.
\]
Therefore, for $\epsilon > 0$ there exists $\delta > 0$ such that $d_G(x, a) < \delta$ implies
\[
d_H(\phi(x)[m] \ast \psi(x)[m], \phi(a)[m] \ast \psi(a)[m]) = d_H(w(x)[m], w(a)[m]) < \epsilon
\]
that is,
\[
\lim_{x\to a} w(x)[m] = w(a)[m].
\]
\end{proof}

We use Theorem~\ref{thm5} to demonstrate the important chain rule. This proof is analogous to Theorem~\ref{thm4} and is omitted.

\begin{theorem}\label{thm5}
Let $M$ be an Abelian metric group with a group metric, $f : G \to H$ continuous at $a \in G$, $g : H \to M$ continuous at $f(a) \in H$, $\phi_g : H \to \tilde{\mathrm{Hom}}(H; M)$ continuous at $f(a)$ and $\phi_f : G \to \tilde{\mathrm{Hom}}(G; H)$ continuous at $a$. Therefore, for all $m \in G$, we have that:
\[
\lim_{x\to a} \phi_g(f(x))[\phi_f(x)[m]] = \phi_g(f(a))[\phi_f(a)[m]].
\]
\end{theorem}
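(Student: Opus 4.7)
The plan is to imitate the proof of Theorem~\ref{thm4}, splitting the target distance by a triangle inequality in $M$ and bounding each piece separately. Write $\Phi(x) := \phi_g(f(x)) \in \tilde{\mathrm{Hom}}(H; M)$ and $y(x) := \phi_f(x)[m] \in H$, so that the goal reads $\Phi(x)[y(x)] \to \Phi(a)[y(a)]$ in $M$ as $x \to a$. The triangle inequality yields
\[
d_M\bigl(\Phi(x)[y(x)],\,\Phi(a)[y(a)]\bigr)
\le
d_M\bigl(\Phi(x)[y(x)],\,\Phi(a)[y(x)]\bigr)
+
d_M\bigl(\Phi(a)[y(x)],\,\Phi(a)[y(a)]\bigr),
\]
and it suffices to drive each summand below $\varepsilon/2$ for $x$ sufficiently near $a$.

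For the first summand, composing continuity of $f$ at $a$ with continuity of $\phi_g$ at $f(a)$ makes $\Phi = \phi_g \circ f$ continuous at $a$ with respect to $\tilde{d}$ on $\tilde{\mathrm{Hom}}(H; M)$. From the definition of $\tilde{d}$, applied at the test point $t = y(x)$, one has
\[
\frac{d_M(\Phi(x)[y(x)],\,\Phi(a)[y(x)])}{1 + d_M(\Phi(x)[y(x)],\,\Phi(a)[y(x)])} \le \tilde{d}(\Phi(x), \Phi(a)).
\]
Exactly as in the proof of Theorem~\ref{thm4}, forcing $\tilde{d}(\Phi(x), \Phi(a)) < \varepsilon/(2+\varepsilon)$ then yields $d_M(\Phi(x)[y(x)],\,\Phi(a)[y(x)]) < \varepsilon/2$. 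The decisive feature is that this bound is uniform in the varying argument $y(x)$, because $\tilde{d}$ is a supremum over the test point.

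For the second summand, $\Phi(a) \in \tilde{\mathrm{Hom}}(H; M)$ is, by membership in that space, a homomorphism continuous at $e_H$, and because $H$ and $M$ are metric groups in which translation is continuous (guaranteed on $H$ by the group-metric condition of \defone), a homomorphism continuous at the identity is continuous on all of $H$. Applying the continuity of $\phi_f$ at $a$ in the same $\tilde{d}$-to-pointwise fashion, but now with $t = m$ held fixed, gives $y(x) \to y(a)$ in $H$. Hence continuity of $\Phi(a)$ at $y(a)$ produces $d_M(\Phi(a)[y(x)],\,\Phi(a)[y(a)]) < \varepsilon/2$ for $x$ close enough to $a$, which finishes the proof.

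The principal obstacle is identical to the one surmounted in Theorem~\ref{thm4}: the inner argument $y(x)$ moves with $x$, so pointwise continuity of $\Phi$ alone cannot control $\Phi(x)[y(x)]$; the supremum defining $\tilde{d}$ is precisely what absorbs this nonuniformity, at the modest cost of converting $r/(1+r)$-estimates back to $r$-estimates via $r < \delta/(1-\delta)$. A secondary, lower-effort point is the bootstrap from continuity of $\Phi(a)$ at $e_H$ to continuity at the arbitrary point $y(a) \in H$, which is the standard homomorphism argument in a metric group with a group metric.
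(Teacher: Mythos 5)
Your proof is correct, and it actually supplies more than the paper does: the paper omits the proof of Theorem~\ref{thm5} entirely, asserting only that it is ``analogous to Theorem~\ref{thm4}.'' Your two-term triangle-inequality split is the natural completion of that remark, and the first half — using the supremum in the definition of $\tilde{d}$ to get a bound on $d_M(\Phi(x)[t],\Phi(a)[t])$ that is uniform in the test point $t$, then converting the $r/(1+r)$ estimate back to an $r$ estimate — is exactly the mechanism of the paper's Theorem~\ref{thm4} proof. Where you go beyond the claimed analogy is the second summand: Theorem~\ref{thm4} never needs to evaluate a \emph{fixed} homomorphism at a \emph{moving} argument, so the bootstrap from continuity of $\Phi(a)$ at $e_H$ (which is all that membership in $\tilde{\mathrm{Hom}}(H;M)$ guarantees) to continuity at the point $y(a)$ is a genuinely additional ingredient, and you are right that it is needed and that it works via $\Phi(a)[y]=\Phi(a)[y\,y(a)^{-1}]\ast\Phi(a)[y(a)]$ together with condition~(2) of Definition~2.1. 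One small misattribution: the translation estimate $d_M(u\ast k,v\ast k)\le c_k\,d_M(u,v)$ that powers this bootstrap is the group-metric condition on $M$ (which is hypothesized), not on $H$; from $H$ you only need that $y\mapsto y\,y(a)^{-1}$ is continuous, which holds because $H$ is a metric group. This does not affect the validity of the argument.
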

\section{Basic Theorems of Differentiation}

In this section, we will demonstrate the basic theorems of differentiation such as: differentiability implies continuity, the linearity of the derivative and the chain rule. To achieve this, we naturally define, for $f, g : G \to H$ and $k \in H$, $(f + g)(x) = f(x) \ast g(x)$, where $\ast$ represents the binary operation in $H$. Although the following theorems appear in~\cite{arora2021}, for convenience they are presented here.

\begin{theorem}[Differentiability implies continuity]\label{thm6}
Let $f : G \to H$ be differentiable at $a \in G$. Then $f$ is continuous at $a$.
\end{theorem}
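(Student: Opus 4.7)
The plan is to unfold the defining identity
\[
f(x)\,f(a)^{-1} \;=\; \phi_f(x)\bigl[x a^{-1}\bigr] \qquad (x \in V_a)
\]
into a statement $f(x) = \phi_f(x)[xa^{-1}] \ast f(a)$, and show that the right-hand side tends to $f(a)$ as $x \to a$. For this, the two ingredients are Theorem~\ref{thm3}, which handles the inner expression $\phi_f(x)[xa^{-1}]$, and Proposition~\ref{prop5}, which handles the outer $\ast f(a)$.

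First I would set $h:G\to G$ by $h(x) = x a^{-1}$. Since $G$ is a metric group, right translation is continuous, so $h$ is continuous at $a$ with $h(a)=e$. Combined with the hypothesis that $\phi_f:G\to \tilde{\mathrm{Hom}}(G;H)$ is continuous at $a$, Theorem~\ref{thm3} yields
\[
\lim_{x\to a}\phi_f(x)\bigl[x a^{-1}\bigr] \;=\; \phi_f(a)[e] \;=\; e_H,
\]
where the last equality uses that $\phi_f(a)\in \tilde{\mathrm{Hom}}(G;H)$ is a homomorphism and hence sends $e$ to $e_H$.

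Next, define $\psi:G\to H$ by $\psi(x) = \phi_f(x)[x a^{-1}]$. The previous step says $\psi$ is continuous at $a$ with value $e_H$ there, so by Proposition~\ref{prop5} (with $k=f(a)$, using $\ast$'s left-translation property and the fact that $H$ is Abelian so left and right translations coincide),
\[
\lim_{x\to a} f(a)\ast \psi(x) \;=\; f(a)\ast \psi(a) \;=\; f(a)\ast e_H \;=\; f(a).
\]
Since the defining identity gives $f(x) = \psi(x)\ast f(a) = f(a)\ast \psi(x)$ for $x\in V_a$, this proves $\lim_{x\to a} f(x) = f(a)$, i.e., $f$ is continuous at $a$.

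The only real subtlety is confirming that the hypotheses of Theorem~\ref{thm3} are satisfied at the exact point we need, and that $\phi_f(a)[e]=e_H$; both are immediate from Definition~\ref{def4} and the fact that elements of $\tilde{\mathrm{Hom}}(G;H)$ are homomorphisms. Note also that working inside $V_a$ is harmless since continuity at $a$ is a local property. No delicate estimates or additional hypotheses on $H$'s metric beyond what Proposition~\ref{prop5} already invokes are required.
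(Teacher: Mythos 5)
Your proof is correct and follows essentially the same route as the paper's: both rewrite the defining identity as $f(x) = f(a) \ast \phi_f(x)[x a^{-1}]$, apply Theorem~\ref{thm3} (with the continuous map $x \mapsto x a^{-1}$) to conclude the inner factor tends to $\phi_f(a)[e] = e_H$, and use Proposition~\ref{prop5} to handle the fixed factor $f(a)$. Your version is, if anything, slightly more careful about the order in which the two lemmas are invoked, but the ingredients and structure are identical.
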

\begin{proof}
Since $f$ is differentiable at $a$, there exists $\phi_f : G \to \tilde{\mathrm{Hom}}(G; H)$ continuous at $a$ such that:
\[
f(x) \ast f(a)^{-1} = \phi_f(x)[x a^{-1}]
\]
for $x$ in some neighborhood of $a$. Then, in this neighborhood, we have:
\[
f(x) = \phi_f(x)[x a^{-1}] \ast f(a) = f(a) \ast \phi_f(x)[x a^{-1}] = f(a) \ast \phi_f(x)[\lambda_{a^{-1}}(x)].
\]
If we do $x \to a$ and use Proposition~\ref{prop5}, we have that:
\[
\lim_{x\to a} f(x) = f(a) \ast \lim_{x\to a} \phi_f(x)[\lambda_{a^{-1}}(x)].
\]
Next, by Theorem~\ref{thm3} we have that:
\[
\lim_{x\to a} f(x) = f(a) \ast \phi_f(a)[\lambda_{a^{-1}}(a)] = f(a) \ast \phi_f(a)[a \cdot a^{-1}] = f(a) \ast \phi_f(a)[e] = f(a) \ast e_H = f(a).
\]
\end{proof}

\begin{theorem}[Linearity of the derivative]\label{thm7}
Assume that $f, g : G \to H$ are differentiable at $a \in G$. Then $f + g$ is differentiable at $a$ and $(f + g)'(a) = f'(a) \oplus g'(a)$.
\end{theorem}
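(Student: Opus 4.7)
The plan is to produce an explicit slope function for $f+g$ at $a$ by taking the pointwise $\oplus$-sum of the slope functions of $f$ and $g$, and then to invoke the preceding theorems to verify its continuity at $a$ in the metric $\tilde{d}$.

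First, by Definition~\ref{def4} applied separately to $f$ and $g$, there exist a common neighborhood $V_a$ of $a$ (take the intersection of the two individual neighborhoods) and functions $\phi_f, \phi_g : G \to \tilde{\mathrm{Hom}}(G; H)$, each continuous at $a$, such that $f(x) \ast f(a)^{-1} = \phi_f(x)[xa^{-1}]$ and $g(x) \ast g(a)^{-1} = \phi_g(x)[xa^{-1}]$ for every $x \in V_a$. Define the candidate slope function $\phi_{f+g}(x) := \phi_f(x) \oplus \phi_g(x)$; by Propositions~\ref{prop1} and~\ref{prop3} this lies in $\tilde{\mathrm{Hom}}(G; H)$. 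Using the pointwise definition of $f+g$ together with the commutativity of $\ast$ on $H$, a direct computation yields
\[
(f+g)(x) \ast (f+g)(a)^{-1} = [f(x) \ast f(a)^{-1}] \ast [g(x) \ast g(a)^{-1}] = \phi_f(x)[xa^{-1}] \ast \phi_g(x)[xa^{-1}] = \phi_{f+g}(x)[xa^{-1}]
\]
for all $x \in V_a$, so the Carathéodory factorization required by Definition~\ref{def4} holds.

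It remains to verify that $\phi_{f+g}$ is continuous at $a$ with respect to the metric $\tilde{d}$ of Definition~\ref{def2}. By Theorem~\ref{thm4} applied with $\phi := \phi_f$, $\psi := \phi_g$ and $w := \phi_{f+g}$, for each fixed $m \in G$ one obtains the pointwise limit $\lim_{x\to a} \phi_{f+g}(x)[m] = \phi_{f+g}(a)[m]$. Since $G$ is locally compact (the standing hypothesis of Theorem~\ref{thm1}), this pointwise convergence upgrades to convergence in the $\tilde{d}$ metric, giving $\lim_{x\to a} \phi_{f+g}(x) = \phi_{f+g}(a)$, which is precisely continuity of $\phi_{f+g}$ at $a$.

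Combining the factorization with the continuity, $f+g$ is differentiable at $a$ with slope function $\phi_{f+g}$, and hence $(f+g)'(a) = \phi_{f+g}(a) = \phi_f(a) \oplus \phi_g(a) = f'(a) \oplus g'(a)$, as claimed. I expect the main obstacle to be not the algebraic factorization, which is immediate from the abelian structure of $H$, but rather the continuity of the composite slope function in the sup-based metric $\tilde{d}$: we are only handed continuity of $\phi_f$ and $\phi_g$ at $a$ and $\oplus$ is defined pointwise, so the passage from pointwise convergence of $\phi_f(\cdot)[m]$ and $\phi_g(\cdot)[m]$ to $\tilde{d}$-convergence of $\phi_f \oplus \phi_g$ is the delicate point, and it is exactly the pair Theorem~\ref{thm4} (pointwise) followed by Theorem~\ref{thm1} (pointwise $\Rightarrow$ metric) that bridges this gap.
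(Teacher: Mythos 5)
Your proposal is correct and follows essentially the same route as the paper: the same candidate slope function $\phi_f \oplus \phi_g$, the same algebraic factorization via commutativity of $H$, and the same appeal to Theorem~\ref{thm4} for continuity. You are in fact slightly more careful than the paper's own write-up, which cites only Theorem~\ref{thm4} in this step even though (as you note, and as the paper itself signals when introducing Theorem~\ref{thm1}) one also needs Theorem~\ref{thm1} — and hence local compactness of $G$ — to upgrade the pointwise convergence to convergence in the metric $\tilde{d}$.
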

\begin{proof}
Since $f, g$ are differentiable at $a$, there are $V_a$ and $B_a$ neighborhoods of $a$ and $\phi_f, \phi_g : G \to \tilde{\mathrm{Hom}}(G; H)$ continuous at $a$ such that:
\[
f(x) \ast f(a)^{-1} = \phi_f(x)[x a^{-1}] \quad \text{for all } x \in V_a
\]
\[
g(x) \ast g(a)^{-1} = \phi_g(x)[x a^{-1}] \quad \text{for all } x \in B_a
\]
Now, for $x \in V_a \cap B_a$ we have:
\begin{align*}
(f + g)(x) \ast [(f + g)(a)]^{-1} &= (f(x) \ast g(x)) \ast (f(a) \ast g(a))^{-1}\\
&= f(x) \ast g(x) \ast f(a)^{-1} \ast g(a)^{-1}\\
&= (f(x) \ast f(a)^{-1}) \ast (g(x) \ast g(a)^{-1})\\
&= \phi_f(x)[x a^{-1}] \ast \phi_g(x)[x a^{-1}]\\
&= (\phi_f(x) \oplus \phi_g(x))[x a^{-1}]
\end{align*}
As $(\tilde{\mathrm{Hom}}(G; H), \oplus)$ is a group, $\phi_f(x) \oplus \phi_g(x) \in \tilde{\mathrm{Hom}}(G; H)$. Let $\phi_{(f+g)}(x) = \phi_f(x) \oplus \phi_g(x)$. By Theorem~\ref{thm4} we have that $\phi_{(f+g)} : G \to \tilde{\mathrm{Hom}}(G; H)$ is continuous in $a$ and so much
\[
(f + g)'(a) = \phi_{(f+g)}(a) = \phi_f(a) \oplus \phi_g(a) = f'(a) \oplus g'(a).
\]
\end{proof}

If we want to obtain a version of the homogeneity of the derivative, we must provide the metric group with a multiplication by scalar compatible with the operation of the group. In this way, the metric group is nothing more than a topological vector space with the topology induced by the metric.

Suppose, then that in the metric group $(H, \hat{\ast})$ a vector space structure $K$ and a continuous application $(\lambda, x) \mapsto \lambda x$ of $K \times H$ in continuous $H$, where in space $K \times H$ the product metric is considered.

Note that the vector space structure automatically requires the group $(H, \hat{\ast})$ to be Abelian.

If $H$ is a topological vector space over the field $K$ and $f : G \to H$ is a function, we define $\alpha f : G \to H$ as $(\alpha f)(x) = \alpha f(x)$ for $\alpha \in K$.

We thus have the following result:

\begin{theorem}[Homogeneity of the derivative]\label{thm8}
Let $(G, \ast)$ be a metric group and $(H, \hat{\ast})$ topological vector space over field $K$. If $f$ is differentiable in $a \in G$, then $\alpha f$ is differentiable at $a$ and also $(\alpha f)'(a) = \alpha f'(a)$
\end{theorem}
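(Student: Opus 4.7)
The plan is to construct a slope function for $\alpha f$ at $a$ by multiplying the slope function of $f$ by $\alpha$ pointwise. First, invoking differentiability of $f$, I fix a neighborhood $V_a$ of $a$ and a continuous $\phi_f : G \to \tilde{\mathrm{Hom}}(G; H)$ with $f(x) \hat{\ast} f(a)^{-1} = \phi_f(x)[x a^{-1}]$ for all $x \in V_a$. Because $H$ is a vector space (so $\hat{\ast}$ is the additive group operation and $y^{-1} = -y$) and scalar multiplication distributes over $\hat{\ast}$, for $x \in V_a$ I would compute
\begin{align*}
(\alpha f)(x) \hat{\ast} [(\alpha f)(a)]^{-1}
 &= \alpha f(x) \hat{\ast} \bigl(\alpha f(a)\bigr)^{-1} \\
 &= \alpha\bigl(f(x) \hat{\ast} f(a)^{-1}\bigr) \\
 &= \alpha\,\phi_f(x)[x a^{-1}].
\end{align*}
This suggests defining $\phi_{\alpha f}(x) := \alpha\,\phi_f(x)$, where $(\alpha\phi)[t] := \alpha(\phi[t])$ for $t \in G$.

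Next I would verify that $\alpha \phi_f(x) \in \tilde{\mathrm{Hom}}(G; H)$. The homomorphism property transports via the distributive identity $\alpha(u \hat{\ast} v) = (\alpha u) \hat{\ast} (\alpha v)$, and continuity at $e_G$ follows by composing the continuity of $\phi_f(x)$ at $e_G$ with the continuity of scalar multiplication by $\alpha$ (which sends $e_H$ to $e_H$). With this, the identity above reads
\[
(\alpha f)(x) \hat{\ast} [(\alpha f)(a)]^{-1} = \phi_{\alpha f}(x)[x a^{-1}], \qquad x \in V_a,
\]
so once $\phi_{\alpha f}$ is shown continuous at $a$ in the metric $\tilde{d}$, the formula $(\alpha f)'(a) = \phi_{\alpha f}(a) = \alpha\,\phi_f(a) = \alpha f'(a)$ will follow immediately.

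The main obstacle is establishing continuity of $\phi_{\alpha f} : G \to \tilde{\mathrm{Hom}}(G; H)$ at $a$ in the $\tilde{d}$-metric, since multiplication by $\alpha$ need not act isometrically or even uniformly continuously on $H$, so I cannot simply push a uniform estimate through the supremum defining $\tilde{d}$. My strategy is to reduce to pointwise convergence and then upgrade: from $\tilde{d}(\phi_f(x), \phi_f(a)) \to 0$ and monotonicity of $r \mapsto r/(1+r)$, for each fixed $t \in G$ we get $d_H(\phi_f(x)[t], \phi_f(a)[t]) \to 0$; joint continuity of scalar multiplication then yields $\phi_{\alpha f}(x)[t] = \alpha\,\phi_f(x)[t] \to \alpha\,\phi_f(a)[t] = \phi_{\alpha f}(a)[t]$ for every $t$. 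Finally I would apply Theorem~\ref{thm1} to convert this pointwise convergence into convergence in $\tilde{d}$, which is precisely continuity of $\phi_{\alpha f}$ at $a$. This invocation tacitly relies on $G$ being locally compact, consistent with the standing hypothesis used elsewhere in the paper.
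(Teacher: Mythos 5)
Your proposal is correct and follows essentially the same route as the paper's proof: the same distributivity computation, the same slope function $\alpha\phi_f$, the same reduction to pointwise convergence via continuity of scalar multiplication, and the same appeal to Theorem~\ref{thm1} to upgrade back to convergence in $\tilde{d}$. You are in fact slightly more careful than the paper, explicitly checking that $\alpha\phi_f(x)$ lies in $\tilde{\mathrm{Hom}}(G;H)$ and flagging the local compactness hypothesis that Theorem~\ref{thm1} requires.
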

\begin{proof}
\[
(\alpha f)(x) \hat{\ast} ((\alpha f)(a))^{-1} = \alpha f(x) \hat{\ast} \alpha f(a)^{-1} = \alpha \bigl( f(x) \hat{\ast} f(a)^{-1} \bigr) = \alpha \psi_f(x)[x \ast a^{-1}]
\]
It only remains to be seen that the application $\alpha \psi_f : G \to \tilde{\mathrm{Hom}}(G; H)$ defined as $\bigl( \alpha \psi_f \bigr)(x) = \alpha \psi_f(x)$ is continuous at $a$, where $\psi_f : G \to \tilde{\mathrm{Hom}}(G; H)$ is continuous at $a$.

Indeed, given the continuity of $\psi_f$ in $a$, we have $\psi_f(x) \to \psi_f(a)$ when $x \to a$, and hence, for $y \in G$, we have that $\psi_f(x)[y] \to \psi_f(a)[y]$.

Let us now set $\alpha \in K$ and $x \to a$, then we have that $\bigl( \alpha, \psi_f(x)[y] \bigr) \to \bigl( \alpha, \psi_f(a)[y] \bigr)$, from where $\alpha \psi_f(x)[y] \to \alpha \psi_f(a)[y]$ for all $y \in G$. Now, by Theorem~\ref{thm1} it follows that $\alpha \psi_f(x) \to \alpha \psi_f(a)$.
\end{proof}

\begin{theorem}[Chain rule]\label{thm9}
Let $M$ be an Abelian metric group, $f : G \to H$ differentiable at $a \in G$, $g : H \to M$ differentiable at $f(a) \in H$. Then $g \circ f : G \to M$ is differentiable at $a$ and $(g \circ f)'(a) = g'(f(a)) \circ f'(a)$.
\end{theorem}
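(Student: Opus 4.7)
The plan is to construct an explicit slope function for $g\circ f$ at $a$ by composing the slope functions of $f$ and $g$, and then to verify continuity using the machinery developed earlier in the paper. Writing out Definition~\ref{def4}, I get a neighborhood $V_a$ of $a$ and $\phi_f$ continuous at $a$ with $f(x)\ast f(a)^{-1}=\phi_f(x)[xa^{-1}]$ for $x\in V_a$, and a neighborhood $W_{f(a)}$ of $f(a)$ and $\phi_g$ continuous at $f(a)$ with $g(y)\ast g(f(a))^{-1}=\phi_g(y)[y\ast f(a)^{-1}]$ for $y\in W_{f(a)}$. By Theorem~\ref{thm6}, $f$ is continuous at $a$, so I can shrink $V_a$ to guarantee $f(V_a)\subseteq W_{f(a)}$. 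Substituting $y=f(x)$ in the second identity and using the first yields
\[
(g\circ f)(x)\ast (g\circ f)(a)^{-1}=\phi_g(f(x))\bigl[\phi_f(x)[xa^{-1}]\bigr]\quad\text{for }x\in V_a.
\]

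This suggests defining $\phi_{g\circ f}(x)=\phi_g(f(x))\circ\phi_f(x)$ as the candidate slope function. First I would check that $\phi_{g\circ f}(x)\in\tilde{\mathrm{Hom}}(G;M)$: the map $\phi_f(x)$ is a homomorphism $G\to H$ continuous at $e_G$ and sends $e_G$ to $e_H$, and $\phi_g(f(x))$ is a homomorphism $H\to M$ continuous at $e_H$, so their composition is a homomorphism $G\to M$ continuous at $e_G$. With this, the displayed identity can be rewritten as $(g\circ f)(x)\ast (g\circ f)(a)^{-1}=\phi_{g\circ f}(x)[xa^{-1}]$, matching the form required by Definition~\ref{def4}.

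The decisive step is verifying that $\phi_{g\circ f}:G\to\tilde{\mathrm{Hom}}(G;M)$ is continuous at $a$. Rather than attacking the metric $\tilde d$ on $\tilde{\mathrm{Hom}}(G;M)$ directly, I would invoke Theorem~\ref{thm1}, which reduces continuity to pointwise convergence on each $t\in G$; the required pointwise convergence
\[
\lim_{x\to a}\phi_g(f(x))[\phi_f(x)[t]]=\phi_g(f(a))[\phi_f(a)[t]]
\]
is exactly Theorem~\ref{thm5}, whose hypotheses follow from the continuity of $\phi_f$ at $a$, the continuity of $\phi_g$ at $f(a)$, and the continuity of $f$ at $a$ (Theorem~\ref{thm6}). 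Theorem~\ref{thm1} then upgrades this to $\phi_{g\circ f}(x)\to\phi_{g\circ f}(a)$ in $\tilde{\mathrm{Hom}}(G;M)$. Evaluating at $a$ gives $(g\circ f)'(a)=\phi_{g\circ f}(a)=\phi_g(f(a))\circ\phi_f(a)=g'(f(a))\circ f'(a)$. The main obstacle is this continuity step; the algebraic identity and the homomorphism check are routine, but applying Theorem~\ref{thm1} silently imports a local compactness requirement on $G$, which I would flag as a hypothesis that must be inherited (explicitly or implicitly) by the chain rule.
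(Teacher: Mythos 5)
Your proposal follows essentially the same route as the paper's proof: compose the two slope functions to get $\phi_{g\circ f}(x)=\phi_g(f(x))\circ\phi_f(x)$, verify the factorization identity, and establish continuity at $a$ by combining Theorem~\ref{thm5} (pointwise convergence) with Theorem~\ref{thm1} (upgrading to convergence in $\tilde{\mathrm{Hom}}(G;M)$). You are in fact somewhat more careful than the paper --- explicitly shrinking $V_a$ so that $f(V_a)\subseteq W_{f(a)}$, checking that the composition lies in $\tilde{\mathrm{Hom}}(G;M)$, and correctly flagging that the appeal to Theorem~\ref{thm1} silently imports a local compactness hypothesis on $G$ that the theorem statement omits.
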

\begin{proof}
Since $f$ is differentiable at $a$ and $g$ is differentiable at $f(a)$, there are $\phi_f : G \to \tilde{\mathrm{Hom}}(G; H)$ continuous at $a$ and $\phi_g : H \to \tilde{\mathrm{Hom}}(H; M)$ continuous at $f(a)$ such that
\[
f(x) \ast f(a)^{-1} = \phi_f(x)[x a^{-1}]
\]
and
\[
g(f(x)) + g(f(a))^{-1} = \phi_g(f(x))[f(x) \ast f(a)^{-1}],
\]
where $+$ is the binary operation of $M$ (usual notation due to $M$ is an Abelian group).

On the other hand,
\begin{align*}
(g \circ f)(x) + [(g \circ f)(a)]^{-1} &= g(f(x)) + g(f(a))^{-1}\\
&= \phi_g(f(x))[f(x) \ast f(a)^{-1}]\\
&= \phi_g(f(x))[\phi_f(x)[x a^{-1}]]\\
&= (\phi_g(f(x)) \circ \phi_f(x))[x a^{-1}].
\end{align*}
Let $\phi_{(g\circ f)}(x) = \phi_g(f(x)) \circ \phi_f(x) \in \tilde{\mathrm{Hom}}(G; M)$. By Theorems~\ref{thm1} and~\ref{thm5}, we have that $\phi_{(g\circ f)} : G \to \tilde{\mathrm{Hom}}(G; H)$ is continuous on $a$ and so
\[
(g \circ f)'(a) = \phi_{(g\circ f)}(a) = \phi_g(f(a)) \circ \phi_f(a) = g'(f(a)) \circ f'(a).
\]
\end{proof}

Notice the impressive proof of the chain rule.
In order to illustrate the exposed theory, we consider two examples, as follows:

\begin{example}
Let $G = M_{n\times n}(\mathbb{R})$ with a usual matrix addition, which is a divisible and Abelian metric group with a usual metric, defined as above. In the first place, let us consider the function
\[
f : G \to G, \quad X \mapsto X^2
\]
and let us see that it is differentiable at every point $A$ of $G$. Indeed, let
\[
\phi_f(X) : G \to G, \quad Y \mapsto \phi_f(X)(Y) = A Y + Y X
\]
then we have that
\[
\phi_f(X)[X - A] = A(X - A) + (X - A)X = X^2 - A^2 = f(X) - f(A)
\]
$\phi_f(X)$ is a homomorphism. Indeed,
\[
\phi_f(X)[Y + W] = A(Y + W) + (Y + W)X = (A Y + Y X) + (A W + W X) = \phi_f(X)[Y] + \phi_f(X)[W]
\]
It only remains to show that $\phi_f$ is continuous at $A$. By Theorem~\ref{thm1}, it is enough to show that
\[
\lim_{X\to A} \phi_f(X)[Y] = \phi_f(A)[Y]
\]
we have that
\[
d\bigl( \phi_f(X)[Y], \phi_f(A)[Y] \bigr) = \|\phi_f(X)[Y] - \phi_f(A)[Y]\| = \|(A Y + Y X) - (A Y + Y A)\| \leq \|Y\| \cdot \|X - A\|
\]
Therefore, for $\epsilon > 0$ and $Y \in G$, $Y \neq 0$, if we perform $\delta = \epsilon/\|Y\|$ we have that $\|\phi_f(X)[Y] - \phi_f(A)[Y]\| < \epsilon$. We have shown that $f$ is differentiable at $A$ and $f'(A) = \phi_f(A)$, where $\phi_f(A)[Y] = A Y + Y A$. For $Y = 0$, the result is clear.

In this example, the derivative is equal to the Fréchet derivative.
\end{example}

\begin{example}
Let $G$ be an Abelian topological group and
\[
f : G \to G, \quad x \mapsto x^3
\]
we have that:
\begin{align*}
f(x) f(a)^{-1} &= x^3 a^{-3}\\
&= x(x a^{-1} x a^{-1} a^{-1}) a^{-1}\\
&= (x a^{-1}) a (x a^{-1} x a^{-1} a^{-1}) a^{-1}\\
&= (x a^{-1}) a d_a (x a^{-1} x a^{-1} a^{-1})\\
&= \bigl( i_G \cdot a d_a (i_G \cdot a d_a) \bigr) [x a^{-1}]
\end{align*}
Here, $i_G : G \to G$ is the identity function, $a d_a : G \to G$ is defined by $a d_a(x) = a x a^{-1}$ and $i_G \cdot a d_a : G \to G$ is defined by $(i_G \cdot a d_a)(x) = i_G(x) \cdot a d_a(x)$. It is clear that both $i_G$ and $a d_a$ are continuous. Therefore, function $\phi(x) = i_G \cdot a d_a (i_G \cdot a d_a)$ is continuous. Furthermore, because $G$ is an Abelian group, $\phi(x)$ is a homomorphism. In summary, we obtain that $\phi(x) \in \tilde{\mathrm{Hom}}(G, H)$.

Finally, if $\phi_f : G \to \tilde{\mathrm{Hom}}(G; H)$ defined by $\phi_f(x) = \phi(x)$ were continuous at $a$ (this depends on the metric chosen in $G$), then function $f$ is differentiable at $a$ in the sense of Definition~\ref{def4}, with $f'(a) = \phi(a)$.
\end{example}

\section{Conclusions}

Carathéodory differentiability for functions $f : \mathbb{R} \to \mathbb{R}$ has the advantage of being able to be generalized in the context of metric groups. In this sense, a definition of differentiability in metric groups has been given, showing its most important properties, such as linearity and the chain rule. The theory is illustrated by calculating the derivative of a specific function between metric groups.

\noindent\textbf{Author Contributions:} Conceptualization, investigation, methodology, and software, H.A.G.D. and S.C.T.; Formal analysis, writing—review, and editing, H.A.G.D., S.C.T. and F.E.H. All authors have read and agreed to the published version of the manuscript.

\vspace{0.5em}
\noindent\textbf{Funding:} This research received no external funding.

\vspace{0.5em}
\noindent\textbf{Institutional Review Board Statement:} Not applicable.

\vspace{0.5em}
\noindent\textbf{Informed Consent Statement:} Not applicable.

\vspace{0.5em}
\noindent\textbf{Data Availability Statement:} Not applicable.

\vspace{0.5em}
\noindent\textbf{Acknowledgments:} The work of Héctor Andrés Granada Díaz was supported by Universidad del Tolima, Ibagué-Colombia. The work of Simeón Casanova Trujillo was supported by Universidad Nacional de Colombia, Sede Manizales. The work of Fredy E. Hoyos was supported by Universidad Nacional de Colombia, Sede Medellín.

\vspace{0.5em}
\noindent\textbf{Conflicts of Interest:} The authors declare no conflict of interest.

\vspace{2em}

\bibliographystyle{unsrt}  
%\bibliography{references}  %%% Remove comment to use the external .bib file (using bibtex).
%%% and comment out the ``thebibliography'' section.

%%% Comment out this section when you \bibliography{references} is enabled.

\end{document}